\newtheorem{remark}{Remark}
\newtheorem{theorem}{Theorem}
\newtheorem{lemma}{Lemma}
\newtheorem{corollary}[theorem]{Corollary}
\newenvironment{proof}{\noindent{\bf Proof.}}%
{\hspace*{\fill}$\Box$\par\vspace{4mm}}
\newcommand{\pr} {{\rm Pr}}
\newcommand{\pra}[1] {\pr\left\{#1\right\}}
\newcommand{\E} {\mathbb{E}}
\newcommand{\Euc} {\mathcal{E}}
\newcommand{\bl}[1] {\mathfrak{b}(#1)}
\newcommand{\al}[1] {\mathfrak{a}(#1)}
\newcommand{\Ea}[1] {\E\left(#1\right)}
\newcommand{\che} {\v{C}eby\v{s}\"{e}v}
\newcommand{\R} {\mathbb{R}}
\newcommand{\Rn} {\mathcal{R}^n}
\newcommand{\nmaha}[1] {\|#1\|_{\Sigma^{-1}}}
\newcommand{\inp}[1] {\langle #1\rangle}
\newcommand{\virg}[1] {``#1''}
\newcommand{\mybreak} {\par\vspace{2mm}\noindent}
\def\cadre{$$\vcenter\bgroup\advance\hsize by -2em\noindent
	\refstepcounter{equation}(\theequation)~\ignorespaces}
\def\endcadre{\egroup\eqno$$\global\@ignoretrue}
\begin{document}

\title{Cantelli's bounds for generalized tail inequalities in Euclidean spaces.}

\author{Nicola Apollonio\footnote{Istituto per le Applicazioni del
		Calcolo, ``Mauro Picone'', Consiglio Nazionale delle Ricerche, Via dei Taurini 19, 00185 Roma, Italy.
		\texttt{nicola.apollonio@cnr.it}.}
}

\date{}

\maketitle	




\begin{abstract}
 Let $X$ be a centered random vector in a finite dimensional real inner product space $\Euc$. For a subset $C$ of the ambient vector space $V$ of $\Euc$ and $x,\,y\in V$, write $x\preceq_C y$ if $y-x\in C$. When $C$ is a closed convex cone in $\Euc$, then $\preceq_C$ is a pre-order on $V$, whereas if $C$ is a proper cone in $\Euc$, then $\preceq_C$ is actually a partial order on $V$. In this paper we give sharp Cantelli's type inequalities for generalized tail probabilities like $\pra{X\succeq_C b}$ for $b\in V$. These inequalities are obtained by \virg{scalarizing} $X\succeq_C b$ via cone duality and then by minimizing the classical univariate Cantelli's bound over the scalarized inequalities.
\end{abstract}
\mybreak
{\small\textbf{Keywords}: tail inequalities, random vectors in Euclidean spaces, cone duality, blocker of a convex set.}
\mybreak
{\small\textbf{MSC}: 15A63, 62G32, 47L07.}




\section{Introduction}\label{sec:intro}
Let $Y$ be a random variable with finite mean $\mu$ and variance $\sigma^2$. Hence, the random variable $X-\mu$ is centered and has the same variance as $Y$.\,For a positive real number $b$, the celebrated Cantelli inequality---also known as \emph{one sided \che-inequality}---reads as
\begin{equation}\label{eq:can_cla}
\pra{X\geq b}\leq \frac{\sigma^2}{b^2+\sigma^2}.
\end{equation}
Both Cantelli's inequality and the classical \che's inequality can be (and in fact have been) extended in several ways \cite{mar_olk, vbc} to a random vector $X=(X_1,\ldots,X_n)'$ in $\R^n$---here and throughout the rest of the paper, $u'$ denotes the transpose of column vector $u\in\mathbb{R}^n$---. As shown in \cite{mar_olk} and in \cite{vbc}, there is a standard recipe that yields such extensions: consider a random vector $X$ supported by a subset $S$ of $\R^n$. Let $T$ be a subset of $S$, and $f: S\rightarrow \R$ be such that $f(x)\geq 0$ for all $x\in S$ and $f(x)\geq 1$ for all $x\in T$. Then, with $\mathbf{1}_T(\cdot)$ denoting the indicator of set $T$ over $S$, one has $f\geq \mathbf{1}_T$ and  
$$\Ea{f(X)}\geq \Ea{f(X)\mathbf{1}_T(X)}\geq \Ea{\mathbf{1}_T(X)}=\pra{X\in T}.$$  
This technique is essentially \virg{Markov inequality}. By taking $f$ in the family $\{f_u \ |\ f_u:\R^n\rightarrow \R^n_+,\,u\in \R^n\}$ where $f_u(x)=\frac{(u'x+u'\Sigma u)^2}{1+u'\Sigma u}$, and minimizing for $u$ under the constraint $f\geq \mathbf{1}_T$,  Marshall and Olkin obtained the following strong and general result.
\begin{theorem}[Marshall and Olkin \cite{mar_olk}]\label{thm:mar_olk}
Let $T$ be a closed convex set in $\R^n$ not containing the origin. If $X$ is a centered random vector of $\R^n$ with positive definite covariance matrix $\Sigma$, then 
\begin{equation}\label{eq:mo_result}
\pra{X\in T}\leq \inf_{\substack{u\in \R^n\\u'x\geq 1,\,\forall x\in T}}\frac{u'\Sigma u}{1+u'\Sigma u}.
\end{equation}
Furthermore, the inequality is sharp, in the sense that there exists a centered random vector $X_0$ whose support contains $T$ and whose covariance matrix is $\Sigma$, such that the inequality is attained as equality. 
\end{theorem}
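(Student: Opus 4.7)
The argument is a Markov-type one in the ``Marshall--Olkin template'' just recalled. For each admissible $u \in \R^n$ (meaning $u'x \geq 1$ for every $x \in T$) I would introduce
\[
f_u(x) = \left(\frac{u'x+u'\Sigma u}{1+u'\Sigma u}\right)^2,
\]
which is nonnegative on all of $\R^n$, and which by admissibility satisfies $f_u(x) \geq 1$ whenever $x \in T$, since $u'x + u'\Sigma u \geq 1 + u'\Sigma u > 0$. Expanding the square and using $\E X = 0$ together with $\E(u'X)^2 = u'\Sigma u$, a direct computation gives $\Ea{f_u(X)} = \frac{u'\Sigma u}{1 + u'\Sigma u}$. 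Since $f_u \geq \cf{X \in T}$, taking expectations yields $\pra{X \in T} \leq \frac{u'\Sigma u}{1+u'\Sigma u}$, and minimizing over admissible $u$ produces \eqref{eq:mo_result}.

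For the sharpness claim I would first simplify the right-hand side. As $t \mapsto t/(1+t)$ is increasing on $[0,\infty)$, the infimum is attained exactly where $u'\Sigma u$ is minimized over admissible $u$. Cauchy--Schwarz in the $\Sigma$-inner product gives $1 \leq u'v \leq \sqrt{u'\Sigma u}\,\sqrt{v'\Sigma^{-1}v}$ for every $v \in T$, so $u'\Sigma u \geq 1/\alpha$ where $\alpha := \min_{v \in T} v'\Sigma^{-1} v$; this minimum is attained at a unique $v^* \in T$ by closedness and convexity of $T$ combined with strict convexity and coercivity of the quadratic form $v \mapsto v'\Sigma^{-1}v$. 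The first-order optimality condition at $v^*$ reads $(x - v^*)'\Sigma^{-1}v^* \geq 0$ for every $x \in T$, which shows that $u^* := \Sigma^{-1}v^*/\alpha$ is admissible and realizes $(u^*)'\Sigma u^* = 1/\alpha$. Hence the optimal value of the bound equals $1/(1+\alpha)$.

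To exhibit the extremal $X_0$, I would put a two-point distribution along the direction $v^*$: mass $1/(1+\alpha)$ at $v^*$ and mass $\alpha/(1+\alpha)$ at $-v^*/\alpha$; this is centered and has rank-one covariance $v^*(v^*)'/\alpha$. Adding an independent zero-mean perturbation $Z$ with covariance $\Sigma - v^*(v^*)'/\alpha$---positive semidefinite once more by Cauchy--Schwarz---supported on the hyperplane $\{z : (v^*)'\Sigma^{-1}z = 0\}$ restores the prescribed covariance $\Sigma$. The delicate point, and the main obstacle, is arranging that $\{X_0 \in T\}$ coincide with the event that the two-point component equals $v^*$: a generic $Z$ could drag $v^* + Z$ off the supporting hyperplane and out of $T$. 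Cleanly handling this requires either selecting $Z$ compatibly with the local geometry of $T$ at $v^*$, or, in the spirit of the original Marshall--Olkin proof, reducing by a supporting-hyperplane argument to the half-space case $T = \{x : (v^*)'\Sigma^{-1}x \geq \alpha\}$, for which the two-point construction suffices without further refinement.
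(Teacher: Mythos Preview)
This theorem is quoted in the paper as a result of Marshall and Olkin; the paper itself only sketches the upper bound, via exactly the Markov-type argument with the quadratic family $f_u$ that you reproduce, and defers to \cite{mar_olk} for everything else, including sharpness. Your derivation of the inequality is the same as the paper's sketch (and your normalization $f_u(x)=\bigl((u'x+u'\Sigma u)/(1+u'\Sigma u)\bigr)^2$ is the one that actually gives $\Ea{f_u(X)}=\frac{u'\Sigma u}{1+u'\Sigma u}$; the paper's displayed denominator $(1+u'\Sigma u)$ appears to be a slip).

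For sharpness the paper gives no argument, so there is nothing to compare against; but your sketch, while on the right track through the identification of the optimal value $1/(1+\alpha)$ and of $v^*$, does have the gap you flag, and the half-space reduction you propose does not close it. Proving that $\Pr\{X_0\in H\}=1/(1+\alpha)$ for the supporting half-space $H=\{x:(u^*)'x\geq 1\}\supseteq T$ is not the same as proving $\Pr\{X_0\in T\}=1/(1+\alpha)$: when $T$ is strictly convex it meets $\partial H$ only at $v^*$, so $v^*+Z$ will typically fall outside $T$. A clean repair is to load all the perturbation on the atom \emph{outside} $T$: put mass $1/(1+\alpha)$ at $v^*$ and mass $\alpha/(1+\alpha)$ at $-v^*/\alpha+W$, where $W$ is zero-mean with covariance $\tfrac{1+\alpha}{\alpha}\bigl(\Sigma-v^*(v^*)'/\alpha\bigr)$. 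This $X_0$ is centered with covariance $\Sigma$; the first atom lies in $T$; and since $(u^*)'\,\mathrm{Cov}(W)\,u^*=0$ one has $(u^*)'W=0$ a.s., so the second atom satisfies $(u^*)'X_0=-1/\alpha<1$ and hence lies outside $H\supseteq T$. Thus $\Pr\{X_0\in T\}=1/(1+\alpha)$ exactly.
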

Notice that Cantelli's inequality \eqref{eq:can_cla} follows from inequality \eqref{eq:mo_result}, after dividing the univariate random variable $X$ by the positive threshold $b$ and observing that the variance of $X/b$ is $\sigma^2/b^2$. The minimization problem on the right-hand side of \eqref{eq:mo_result} is solved by minimizing the quadratic form $u'\Sigma u$ over the same set. Thus, this is a convex minimization problem which can be solved by the techniques described in \cite{boyd}. As proved in \cite{mar_olk}, the infimum in \eqref{eq:mo_result} is attained. The function $f_{\hat{u}}$ corresponding to the vector $\hat{u}$ attaining the infimum in  inequality \eqref{eq:mo_result}, can be seen as a kind of envelope of a given shape (quadratic, in the present case) for the probability on the right-hand side. The same inequality can be interpreted as follows: first we approximate $T$ linearly inside the probability; this approximation yields a family of linear inequalities, each of which is the tail of a scalar random variable; we use Cantelli's inequality \eqref{eq:can_cla} to bound each of these tails, and finally we choose the tightest one. Let us describe this process for a non-empty arbitrary subset $T$ of $\R^n$: let $\bl{T}=\{u\in \R^n \ |\ u'x\geq 1,\quad \forall x\in T\}$---observe that $\bl{T}$ is always a closed convex set regardless of the argument $T$ (see Section \ref{sec:prel} for more details)---; since, by definition,  $T\subseteq \bl{\bl{T}}$ and $x\in \bl{\bl{T}}\Leftrightarrow u'x\geq 1,\, \forall u\in \bl{T}$, it follows that if $\bl{T}\not=\emptyset$, then
$$\pra{X\in T}\leq \pra{X\in \bl{\bl{T}}}=\pra{u'X\geq 1,\, \forall u\in \bl{T}},$$
where $X$ is a centered random vector with positive definite covariance matrix $\Sigma$. Now, for all $u\in \bl{T}$, the random variable $u'X$ is a centered random variable with variance $u'\Sigma u$. Thus, in view of Cantelli's inequality, for all $u\in \bl{T}$ it holds that $$\pra{X\in T}\leq \frac{u'\Sigma u}{1+u'\Sigma u}.$$
Hence, when $T$ is convex and $\bl{T}$ is non-empty, after taking the infimum over $\bl{T}$, we recover \eqref{eq:mo_result} from another perspective. In this paper we show, via cone duality, that the same result holds in every finite dimensional Euclidean space $\Euc$. Moreover, when $T$ is of the form $b+C$, with $C$ being a convex cone in $\Euc$, we provide a specialized Cantelli's bound which is sharp. When, furthermore, $C$ has a non-empty interior, then $C$ induces a preorder $\succeq_C$ on the ambient space of $\Euc$ such that the event $(X\in T)$ can be written as $(X\succeq_C b)$ and can be interpreted as a generalized tail inequality (we recover classical tail inequalities when $C$ is the non-negative orthant). Hence Cantelli's inequality naturally extends to generalized tail inequalities in finite dimensional Euclidean spaces. In addition, for special choices of the cone $C$ and the threshold $b$, Cantelli's inequality for generalized tails has a particular simple expression in terms of certain norms of $b$. In the case of $\R^n$, with the standard inner product, if $\Sigma^{-1}$ is a non-negative matrix and $b\geq 0$, such an expression reads as
$$\pra{X\geq b}\leq (1+\|b\|_M^2)^{-1}$$
where $\|b\|_M=\sqrt{b'\Sigma^{-1}b}$ is the \emph{Mahalanobis norm} of $b$. Notice that inequality \eqref{eq:can_cla} specializes to the inequality above, after dividing numerator and denominator by $\sigma^2$. This yields yet another extension. 

The rest of the paper goes as follows. In Section \ref{sec:prel} we develop the machinery to state and prove the main results provided in Section \ref{sec:main} with the help of three examples involving the least eigenvalue of a random symmetric matrix, statistical hypotheses testing, and the feasibility of a system of linear inequalities with coefficients in $\{0,1\}$.

\section{Preparatory results}\label{sec:prel}
Finite dimensional Euclidean spaces, namely, finite dimensional real vector spaces $V$ equipped with an inner product $\inp{\cdot,\cdot}$, will be denoted by calligraphic letters, such as $\Euc$. 
The symbol $\Rn$ stands for the Euclidean space $(\R^n,\inp{\cdot,\cdot})$, where $\inp{\cdot,\cdot}$ is the standard dot product in $\R^n$. In what follows, when we speak of Euclidean spaces, we mean finite dimensional Euclidean spaces over the reals. 
\mybreak
Dealing with random vectors, the Borel sets we are interested in have the form $b+C$, where $b\in V$ and $C\subseteq V$ is a non-empty set, often a \emph{cone}. A \emph{cone} in $\Euc$ is a subset $C$ of $V$ that is closed under taking positive scalar multiples, i.e., $\lambda C\subseteq C$ for every $\lambda>0$, while a \emph{convex cone} $C$ is a cone that is closed under taking sum, i.e, $C+C\subseteq C$. Clearly, the set consisting only of the zero vector of $V$, is trivially a cone (the \emph{trivial cone} indeed). In the sequel, when we speak of cones we mean non-empty non-trivial cones. Crucial to the definition of \emph{generalized inequality} is the following notion of duality of subsets of Euclidean spaces. First identify the algebraic dual $V^*$ of $V$ with $V$ by the inner product in $\Euc$ via the isomorphism $V\ni v\mapsto \inp{v,\cdot}\in V^*$. Let now $C$ be a non-empty subset of $V$. The \emph{dual} $C^*$ of $C$ in $\Euc$ is the set
$$C^*=\{u\in V \ |\ \langle u,v\rangle\geq 0,\quad \forall v\in C \}.$$
Write $C^{**}$ for the double dual of $C$, namely $(C^*)^*$.\,The following facts, the first two of which are straightforward consequences of the definition, are known about the dual of a non-empty set $C$ (see \cite{MonBe,schne}).
\begin{enumerate}[label=\textrm{(\alph*)},ref=\alph*]
\item\label{com:c1} $C^*$ is always a closed convex cone;
\item $C\subseteq C^{**}$;
\item\label{com:c3} $C=C^{**}$ if and only if $C$ is a closed convex cone. 
\end{enumerate}
The last property implies that $C^*=C^{***}$ because $C^*$ is a closed convex cone, by \eqref{com:c1}. A cone $C$ is \emph{proper} whenever $C$ is a closed convex cone that is also pointed, i.e., $C\cap -C=\{0\}$, where $0$ is the zero vector of $V$, and has non-empty interior. A cone $C$ is \emph{self-dual in $\Euc$} if $C=C^*$.
\mybreak 
For a random vector $X=(X_1,\ldots,X_n)'\in\R^n$ and $b=(b_1,\ldots,b_n)'\in \R^n$, the event $(X\geq b)$ is said to be a \emph{tail} of $X$. Such an event reads as $(X_1\geq b_1,\ldots,X_n\geq b_n)$ and is the same event as $X-b\in \R^n_+$. The non-negative orthant $\R^n_+$ is a self-dual cone in $\Rn$. Clearly, $X-b$ is non-negative if and only if $u'(X-b)$ is non-negative for all non-negative vectors $u\in \R^n$, i.e., for all $u$ in the dual cone of $\R^n_+$. This fact can be generalized as follows. Let the ambient space of $\Euc$ be $V$ and let $C$ be a non-empty subset of $V$. For $x,\,y\in V$, write $y\succeq_C x$ if $y-x\in C$. If $C$ is a convex cone, then $\succeq_C$ is a pre-order on $V$, while if $C$ is a proper cone, then $\succeq_C$ is a partial order on $V$. In any case, even when $C$ is arbitrary, by duality, one has
\begin{equation}\label{eq:gen_in}
	y\succeq_C x\Longrightarrow y\succeq_{C^{^{**}}} x\Longleftrightarrow \inp{u,y}\geq \inp{u,x},\forall u\in C^*
\end{equation}
which reduces to 
$$y\succeq_C x\Longleftrightarrow \inp{u,y}\geq \inp{u,x},\forall u\in C^*$$
when $C$ is a closed convex cone.
Generalized inequalities, and hence generalized tails, are well behaved with respect to natural transformations of random vectors because cones are preserved by linear transformation. Actually, when the transformation is invertible, closedness is preserved as well. Recall that given a linear map $f$ between the ambient vector spaces $V$ and $W$ of two Euclidean spaces $\Euc$ and $\mathcal{F}$, the \emph{adjoint map of $f$} is the unique linear map $f^*$ that satisfies  $\inp{f(v),w}=\inp{v,f^*(w)}$ for all $v\in V$ and all $w\in W$, where the inner product on left-hand side is the inner product of $\mathcal{F}$, while the inner product on the right-hand side is the inner product of $\Euc$. Moreover, when $f$ is a vector space isomorphism, the dual of the linear image of $C$ can be described  very easily as follows
\begin{equation}\label{eq:lin_image}
	f(C)^*=(f^*)^{-1}(C^*).
\end{equation}
 Indeed, 
$$u\in f(C)^*\Leftrightarrow \inp{f(v),u}\geq 0,\, \forall v\in C\Leftrightarrow \inp{v,f^*(u)}\geq 0,\, \forall v\in C\Leftrightarrow f^*(u)\in C^*\Leftrightarrow u\in (f^*)^{-1}(C^*).$$
We need also a less known duality device which we borrow from the Theory of \emph{Blocking pairs of polyhedra} \cite{biblelp}. Let $V$ be the ambient space of $\Euc$. For $T\subseteq V$, the \emph{blocker of $T$ in $\Euc$} is the set
$$\mathfrak{b}(T)=\{u\in V \ |\ \inp{u,x}\geq 1,\quad \forall x\in T\}.$$  
Analogous to the dual of $T$, the blocker of $T$ has the following properties, the first two of which are straightforward.
\begin{enumerate}[label=\textrm{(\roman*)},ref=\roman*]
\item\label{com:b1} if $\bl{T}$ is non-empty, then $\bl{T}$ is a closed convex set being the intersection of closed half-spaces;
\item\label{com:b2} if $T\subseteq T'$, then $\bl{T}\supseteq \bl{T'}$; moreover, if $\bl{T}$ is non-empty, then $T\subseteq \bl{\bl{T}}$;
\item\label{com:b3} if $T=b+C$ where $b\in V$ and $C$ is such that $\emptyset\not=C\subseteq V$, then 
\begin{equation}\label{eq:chain1}
\bl{b+C}\supseteq \bl{b+C^{**}}\supseteq C^*\cap \{u\in V \ |\ \inp{u,b}\geq 1\};
\end{equation}
moreover, if $C$ is a nontrivial cone, then
\begin{equation}\label{eq:chain2}
C^*\cap \{u\in V \ |\ \inp{u,b}>0\}\supseteq \bl{b+C}=C^*\cap \{u\in V \ |\ \inp{u,b}\geq 1\}.	
\end{equation} 
To prove \eqref{eq:chain1}, observe in the first place if $u\in C^*\cap \{u\in V \ |\ \inp{u,b}\geq 1\}$, then $\inp{u,b+y}\geq 1,\,\forall y\in C^{**}$. Hence $u\in \bl{b+C^{**}}$ and $u\in \bl{b+C}$ because $\bl{b+C}\supseteq \bl{b+C^{**}}$ by \eqref{com:b2}. Let us prove \eqref{eq:chain2}. Since $C^*\cap \{u\in V \ |\ \inp{u,b}\geq 1\}\subseteq C^*\cap \{u\in V \ |\ \inp{u,b}>0\}$, it follows that to prove \eqref{eq:chain2} it suffices to prove that $\bl{b+C}=C^*\cap \{u\in V \ |\ \inp{u,b}\geq 1\}$ which, after \eqref{eq:chain1}, will follow by $\bl{b+C}\subseteq C^*\cap \{u\in V \ |\ \inp{u,b}\geq 1\}$. Let us prove the latter inclusion. Observe that if $u\in \bl{b+C}$ and $C$ is a cone, then necessarily $\inp{u,y}\geq 0$ for all $y\in C$ for, if not, there is $y_0\in C$ such that $\inp{u,y_0}< 0$ and $b+\lambda y_0\in T$ for all $\lambda>0$. Hence $\inp{u,b+\lambda y_0}<1$ for large enough $\lambda$. Thus $\bl{b+C}\subseteq C^*$. By the same reasoning, it holds that $u\in \bl{b+C}\Rightarrow \inp{u,b}\geq 1$. To see this, assume by contradiction that $\inp{u_0,b}<1$ for some $u_0\in \bl{b+C}$. Since $u_0\in \bl{b+C}$, there exists $y_0\in C$, $y_0\not=0$, such that $\inp{u_0,b+\lambda y_0}\geq 1$ for all $\lambda>0$. Nevertheless, the latter inequality cannot be satisfied for a small enough $\lambda$. We conclude that the desired inclusion is true. 
\item\label{com:b4} If $T=P+\R^n_+$ where $P$ is a polytope contained in $\R^n_+$, then $T=\bl{\bl{T}}$ in $\Rn$. Moreover, if $b_1\ldots,b_m$ are the  vertices of $P$, then $\bl{T}=\{u\in \R^n_+ \ |\ Bu\geq \mathbf{1}_m\}$ where $B$ is the matrix whose $i$-th row is $b_i'$, $i=1,\ldots, m$, and $\mathbf{1}_m$ is the all ones vector of $\R^m$. Furthermore, $T$ is of the form $T=\{x\in \R^n_+\ |\ Ax\geq \mathbf{1}_l\}$ where each vertex of $\bl{T}$ occurs among the rows of $A$ and there are most $l$ such vertices. Conversely, if $A$ is any non-negative matrix with $l$ rows and $n$ columns, then the set $T=\{x\in \R^n_+\ |\ Ax\geq \mathbf{1}_l\}$ has the form $P+\R^n_+$ where $P$ is a polytope contained in $\R^n_+$ and the  vertices of $\bl{T}$ all occur among the rows of $A$. See \cite{biblelp} for more details. 
\end{enumerate} 

\begin{remark}
Usually the blocker of a polyhedron $T$ of the form $P+\R^n_+$ is defined as the set $B(T)=\{u\in\R^n_+ \ |\ u'x\geq 1,\, \forall x\in T\}$. Hence $B(T)=\bl{T}\cap \R^n_+$ in $\Rn$. However the two definitions coincide in this case because, reasoning as in the proof of \eqref{eq:chain2}, $\bl{T}\subseteq (\R^n_+)^*=\R^n_+$. Therefore $\bl{T}=B(T)$.  
\end{remark}
In the Euclidean space $\Euc=(V,\inp{\cdot,\cdot})$, for a cone $C$ and a vector $b$ in $V$ the set 
$$\al{b,C}=C^*\cap \{u\in V \ |\ \inp{u,b}>0\}$$
plays a distinguished role in the following intermediate results. 
\begin{lemma}\label{lemma:0}
For a cone $C$ in the Euclidean space $\Euc=(V,\inp{\cdot,\cdot})$, one has $\al{b,C}\not=\emptyset$ if and only if $\bl{b+C}\not=\emptyset$. Moreover, one has $\al{b,C}\not=\emptyset$ if and only if $0\not\in b+C^{**}$.   
\end{lemma}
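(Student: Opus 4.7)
The plan is to leverage the chain \eqref{eq:chain2} for the first equivalence and the definition of the double dual for the second.

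For the first equivalence, the $(\Leftarrow)$ direction is immediate from the chain \eqref{eq:chain2}: since $\bl{b+C}\subseteq C^*\cap\{u\in V\mid \inp{u,b}>0\}=\al{b,C}$, any witness of non-emptiness of $\bl{b+C}$ is already a witness for $\al{b,C}$. For the $(\Rightarrow)$ direction, I would pick $u\in\al{b,C}$, so $u\in C^*$ and $\alpha:=\inp{u,b}>0$. Then $u/\alpha$ lies in $C^*$ (since $C^*$ is a cone, by property \eqref{com:c1}) and satisfies $\inp{u/\alpha,b}=1\geq 1$. By the equality on the right of \eqref{eq:chain2}, $u/\alpha\in\bl{b+C}$, proving non-emptiness.

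For the second equivalence, I would argue contrapositively: $\al{b,C}=\emptyset$ means no $u\in C^*$ satisfies $\inp{u,b}>0$, i.e., $\inp{u,b}\leq 0$ for every $u\in C^*$. This last condition is equivalent to $\inp{u,-b}\geq 0$ for every $u\in C^*$, which by the very definition of $C^{**}=(C^*)^*$ reads $-b\in C^{**}$, i.e., $0\in b+C^{**}$. Negating yields $\al{b,C}\neq\emptyset\Longleftrightarrow 0\notin b+C^{**}$.

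There isn't really a serious obstacle here; every ingredient is already in place. The only point requiring care is to distinguish the strict inequality $\inp{u,b}>0$ defining $\al{b,C}$ from the non-strict $\inp{u,b}\geq 1$ appearing in $\bl{b+C}$, and to observe that in a cone one can always rescale a witness with $\inp{u,b}>0$ to achieve $\inp{u,b}\geq 1$, which is precisely what the chain \eqref{eq:chain2} already encodes. The translation between $\al{b,C}=\emptyset$ and $-b\in C^{**}$ is simply the defining property of the dual cone, with no appeal to closure or separation beyond what has been recorded in properties \eqref{com:c1}--\eqref{com:c3}.
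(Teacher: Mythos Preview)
Your proof is correct and follows essentially the same approach as the paper: the paper also invokes the chain \eqref{eq:chain2} to get $\bl{b+C}\neq\emptyset\Rightarrow\al{b,C}\neq\emptyset$, rescales a witness $u\in\al{b,C}$ by $1/\inp{u,b}$ to land in $\bl{b+C}$ for the converse, and for the second assertion runs the same contrapositive chain $\al{b,C}=\emptyset\Leftrightarrow\inp{u,b}\leq 0\ \forall u\in C^*\Leftrightarrow -b\in C^{**}\Leftrightarrow 0\in b+C^{**}$.
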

\begin{proof}
By \eqref{com:b3}, $\bl{b+C}\not=\emptyset\Rightarrow \al{b,C}\not=\emptyset$. Conversely, if $u\in \al{b,C}$, then $\frac{u}{\inp{u,b}}\in \bl{b+C}$. To prove the other assertion observe that 
$$\al{b,C}=\emptyset\Longleftrightarrow \inp{u,b}\leq 0\,\forall u\in C^*\Longleftrightarrow -b\in C^{**}\Longleftrightarrow 0\in b+C^{**}.$$    
\end{proof}
Notice that the condition $0\not\in b+C^{**}$ in Lemma \ref{lemma:0} can be written as $b\in V\setminus -C^{**}$.
\begin{lemma}\label{lemma:1}
In the Euclidean space $\Euc=(V,\inp{\cdot,\cdot})$ let $C$ be a nontrivial cone, and $b\in V\setminus -C^{**}$. If $f,\,g:V\rightarrow \R$ are defined by
$$f(u)=\frac{q(u)}{\inp{u,b}^2+q(u)}\quad\text{and}\quad  g(u)=\frac{q(u)}{1+q(u)}$$
where $q: V\rightarrow \R$ is a positive definite quadratic form, then 
$$\inf_{\al{b,C}}f(u)=\inf_{\bl{b+C}}g(u).$$
\end{lemma}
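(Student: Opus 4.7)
The plan is to exploit the $2$-homogeneity of the quadratic form $q$ to turn the infimum over $\al{b,C}$ into a problem on the normalized slice $\bl{b+C}$. By equation \eqref{eq:chain2} we have the explicit descriptions
\[
\bl{b+C}=C^*\cap\{u\in V\ |\ \inp{u,b}\geq 1\},\qquad \al{b,C}=C^*\cap\{u\in V\ |\ \inp{u,b}>0\},
\]
so in particular $\bl{b+C}\subseteq \al{b,C}$, and by Lemma \ref{lemma:0} both sets are non-empty under the standing assumption $b\in V\setminus -C^{**}$.

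The key observation is that $f$ is invariant under scaling $u\mapsto \lambda u$ with $\lambda>0$: indeed, $q(\lambda u)=\lambda^{2}q(u)$ and $\inp{\lambda u,b}^2=\lambda^{2}\inp{u,b}^2$, so the factor $\lambda^{2}$ cancels between numerator and denominator. Therefore, for every $u\in\al{b,C}$ one may set $\tilde u := u/\inp{u,b}$, which satisfies $\inp{\tilde u,b}=1$ and $\tilde u\in C^*$ (since $C^*$ is a cone and $\inp{u,b}>0$), hence $\tilde u\in \bl{b+C}$. A direct computation gives
\[
f(u)=f(\tilde u)=\frac{q(\tilde u)}{1+q(\tilde u)}=g(\tilde u).
\]
This immediately yields $\inf_{\bl{b+C}}g\leq g(\tilde u)=f(u)$ for every $u\in\al{b,C}$, so $\inf_{\bl{b+C}}g\leq \inf_{\al{b,C}}f$.

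For the reverse inequality I will use the inclusion $\bl{b+C}\subseteq \al{b,C}$ together with a pointwise comparison: on $\bl{b+C}$ we have $\inp{u,b}^{2}\geq 1$, so $\inp{u,b}^{2}+q(u)\geq 1+q(u)$ and, since both denominators are positive and numerators coincide, $f(u)\leq g(u)$. Hence $\inf_{\al{b,C}}f\leq f(u)\leq g(u)$ for every $u\in\bl{b+C}$, giving $\inf_{\al{b,C}}f\leq \inf_{\bl{b+C}}g$ and completing the proof.

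There is no real obstacle here; the only point that requires a moment of care is verifying that the normalization $u\mapsto u/\inp{u,b}$ lands inside $\bl{b+C}$ (which uses $C^*$ being a cone and $\inp{u,b}>0$, the defining condition of $\al{b,C}$), and that the quadratic-form scaling is correctly applied so that $f(u)=g(\tilde u)$ holds as an equality, not merely an inequality.
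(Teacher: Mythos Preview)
Your proof is correct and follows essentially the same approach as the paper: both arguments hinge on the degree-zero homogeneity of $f$, the normalization $u\mapsto u/\inp{u,b}$ (which maps $\al{b,C}$ into $\bl{b+C}$ and turns $f$ into $g$), the pointwise comparison $f\leq g$ on $\bl{b+C}$ coming from $\inp{u,b}\geq 1$, and the inclusion $\bl{b+C}\subseteq\al{b,C}$. The paper packages these into a single circular chain of four inequalities, whereas you prove the two directions separately, but the content is identical.
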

\begin{proof}
The assumption on $b$ guarantees that $\al{b,C}$ and $\bl{b+C}$ are both non-empty by Lemma \ref{lemma:0}. Since $q$ is a quadratic form, it is homogeneous of degree 2. Hence, $f$ is homogeneous of degree 0, namely $f(\lambda u)=f(u)$. Observe that $f(u)\leq 1$ for all $u\in V$ and that $f(u)=1$ over the hyperplane $\{u\in V \ |\ u'b=0\}$. Hence, by homogeneity, for every $u\in C^*$, $f$ is constant on the rays $\lambda u$, $\lambda>0$. In particular, 
\begin{equation}\label{eq:ancella}
f(u)=f\left(\frac{u}{\inp{u,b}}\right)=g\left(\frac{u}{\inp{u,b}}\right),\quad \forall u\in C^*\cap \{u\in V \ | \ \inp{u,b}>0\}.
\end{equation}
Since by \eqref{eq:chain2} it holds that $\bl{b+C}=C^*\cap \{u\in V \ |\ \inp{u,b}\geq 1\}$, it follows that 
$$f(u)=\frac{q(u)}{\inp{u,b}^2+q(u)}\leq \frac{q(u)}{1+q(u)}=g(u).$$
Hence
\begin{equation}\label{eq:ancella1}
	f(u)\leq g(u)\quad \forall u\in \bl{b+C}.
\end{equation}
Let $H^*=\{u\in V \ | \ \inp{u,b}=1\}$ and observe that
\[
\begin{split}
\inf_{\al{b,C}}f(u)&=\inf_{\al{b,C}\cap H^*}g(u)\quad (\text{by\eqref{eq:ancella}})\\
	&\geq\inf_{\bl{b+C}}g(u)\quad (\text{because $\al{b,C}\cap H^*\subseteq \bl{b+C}$ by \eqref{eq:chain2}})\\
	&\geq\inf_{\bl{b+C}}f(u)\quad (\text{by \eqref{eq:ancella1}})\\
	&\geq\inf_{\al{b,C}}f(u)\quad (\text{because $\al{b,C}\supseteq \bl{b+C}$ by \eqref{eq:chain2}})
	\\
\end{split}
\]
therefore equality must hold throughout yielding the desired equality.
\end{proof}
Recall that given a self-adjoint positive definite endomorphism of $V$ in $\Euc=(V,\inp{\cdot,\cdot})$, the bilinear form $\inp{u,A(v)}$ induces a norm $p$ on $V$ by $p(v)=\sqrt{\inp{v,A(v)}}$. The \emph{dual norm} of $p$ is the norm $p^*$ such that $$p^*(v)=\sup_{u\not=0}\frac{\inp{u,v}}{\sqrt{\inp{u,A(u)}}}.$$ 
One has $p^*(v)=\sqrt{\inp{v,A^{-1}(v)}}$. Although this fact is \virg{folklore}, we give a proof here due to lack of references.

Since $A$ is a self-adjoint positive definite endomorphism, $A=B^2$ for some self-adjoint positive definite endomorphism $B$ ($B$ is a \emph{square-root} of $A$). Hence, by the Cauchy-Schwarz inequality and because $B$ is self-adjoint,
$$\inp{u,v}=\inp{u,BB^{-1}(v)}=\inp{B(u),B^{-1}(v)}\leq \sqrt{\inp{B(u),B(u)}}\sqrt{\inp{B^{-1}(v),B^{-1}(v)}}=\sqrt{\inp{u,A(u)}}\sqrt{\inp{v,A^{-1}(v)}}.$$
Therefore $p^*(v)\leq \sqrt{\inp{v,A^{-1}(v)}}$ with equality for $v=0$. On the other hand, if $v\not=0$, then the vector $\overline{v}$ defined by
$$\overline{v}=\frac{A^{-1}(v)}{\sqrt{\inp{v,A^{-1}(v)}}}$$
is such that 
$$p^*(v)\geq \frac{\inp{\overline{v},v}}{\sqrt{\inp{\overline{v},A(\overline{v})}}}=\sqrt{\inp{v,A^{-1}(v)}}.$$
Hence $p^*$ has the stated expression. 
\begin{lemma}\label{lemma:2}
In the Euclidean space $\Euc=(V,\inp{\cdot,\cdot})$ let $C$ be a closed convex cone, $b\in V\setminus -C$, and $p$ the norm $p(v)=\sqrt{\inp{v,A(v)}}$ on $V$ where $A$ is a self-adjoint positive definite endomorphism of $V$. If $b\in A(C^*)$ then, with $q=p^2$, in  the notation of Lemma \ref{lemma:1},
$$\inf_{\al{b,C}}f(u)=\frac{1}{(p^*(b))^2+1}$$
where $p^*$ is the dual norm of $p$, namely
$$p^*(b)=\sup_{u\not=0}\frac{\inp{u,b}}{p(u)}=\sqrt{\inp{b,A^{-1}(b)}}.$$ 
\end{lemma}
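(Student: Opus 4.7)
The plan is to exploit the monotonicity of $t\mapsto t/(1+t)$ to reduce the infimum of $f$ to a Cauchy--Schwarz-type optimization whose extremizer can be identified explicitly, and then to use the hypothesis $b\in A(C^*)$ to show that this extremizer already lies in $\al{b,C}$, so that no feasibility loss occurs.

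First I would rewrite
\[
f(u)=\frac{q(u)}{\inp{u,b}^2+q(u)}=1-\frac{\inp{u,b}^2}{\inp{u,b}^2+q(u)}=1-\frac{R(u)}{1+R(u)},
\]
where $R(u):=\inp{u,b}^2/q(u)=(\inp{u,b}/p(u))^2$ is well defined on $\al{b,C}$ since $q=p^2$ is positive definite. Because $t\mapsto t/(1+t)$ is strictly increasing on $[0,\infty)$, minimizing $f$ over $\al{b,C}$ is equivalent to maximizing $\inp{u,b}/p(u)$ over $u\in\al{b,C}$ (note $\inp{u,b}>0$ on $\al{b,C}$).

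Next I would recall from the computation preceding the lemma that
\[
\sup_{u\neq 0}\frac{\inp{u,b}}{p(u)}=p^*(b)=\sqrt{\inp{b,A^{-1}(b)}},
\]
and that this supremum is attained (up to positive scaling) at $u_0:=A^{-1}(b)$; indeed, $\inp{u_0,b}/p(u_0)=\inp{A^{-1}(b),b}/\sqrt{\inp{A^{-1}(b),b}}=\sqrt{\inp{b,A^{-1}(b)}}=p^*(b)$. The crucial step is then to verify that $u_0\in\al{b,C}$. The hypothesis $b\in A(C^*)$ gives exactly $A^{-1}(b)\in C^*$, i.e.\ $u_0\in C^*$. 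Moreover, since $C$ is a closed convex cone, $0\in -C$, so $b\in V\setminus -C$ forces $b\neq 0$; hence $\inp{u_0,b}=\inp{b,A^{-1}(b)}=p^*(b)^2>0$. Therefore $u_0\in C^*\cap\{u\in V\ |\ \inp{u,b}>0\}=\al{b,C}$.

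Combining these, $\sup_{u\in\al{b,C}}R(u)=p^*(b)^2$, and consequently
\[
\inf_{\al{b,C}}f(u)=1-\frac{p^*(b)^2}{1+p^*(b)^2}=\frac{1}{(p^*(b))^2+1},
\]
as claimed. The only subtle point is ensuring that the unconstrained optimizer $A^{-1}(b)$ lies in the feasible cone $\al{b,C}$; this is precisely what the hypothesis $b\in A(C^*)$ is designed to guarantee, and everything else is elementary manipulation.
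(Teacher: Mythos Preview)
Your proof is correct and follows essentially the same route as the paper: both arguments divide through by $q(u)$ to reduce the infimum of $f$ to the supremum of $\inp{u,b}/p(u)$ over $\al{b,C}$, and then use the hypothesis $b\in A(C^*)$ to check that the unconstrained maximizer $A^{-1}(b)$ (or a positive multiple of it) already lies in $\al{b,C}$. Your explicit justification that $b\neq 0$ (via $0\in -C$ for a closed cone) is a small point the paper leaves implicit, but otherwise the two proofs are the same.
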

\begin{proof}
Since $C$ is a closed convex cone, then $C=C^{**}$ by \eqref{com:c3}. Hence, the assumption on $b$ guarantees that $\al{b,C}$ is non-empty by Lemma \ref{lemma:0}. Therefore, we can divide the numerator and the denominator of $f(u)$ by $q(v)$. This yields    
$$\inf_{\al{b,C}}f(u)=\left(\sup_{\al{b,C}}\left\{\frac{\inp{u,b}}{p(u)}\right\}^2+1\right)^{-1}.$$
Now, if $b\in A(C^*)$, then $A^{-1}(b)\in C^*$ by \eqref{eq:lin_image}. Hence $\hat{b}=\frac{A^{-1}(b)}{\sqrt{\inp{b,A^{-1}(b)}}}$ belongs to $\al{C,b}$ because $\hat{b}\in C^*$, $\hat{b}$ being a positive scalar multiple of $A^{-1}(b)$, and $\inp{\hat{b},b}= \sqrt{\inp{b,A^{-1}(b)}}>0$. After plugging $\hat{b}$ into $\frac{\inp{u,b}}{p(u)}$, we conclude that the supremum $p^*(b)$ of $\frac{\inp{u,b}}{p(u)}$ is attained over $\al{b,C}$ and this concludes the proof. 
\end{proof}

\section{Main results}\label{sec:main}
In order to state and prove the main results of the paper, we shall recap a very (few) basic facts about random vectors. We follow \cite{eaton}. Let $(\Omega,\mathcal{F},\mathbb{P})$ be a probability space, where $\Omega$ is a set, $\mathcal{F}$ is a $\sigma$-algebra on $\Omega$, and $\mathbb{P}$ is a probability measure on $\mathcal{F}$. Also let $\Euc=(V,\inp{\cdot,\cdot})$ be an Euclidean space and $\mathcal{B}$ the smallest $\sigma$-algebra containing all the open balls of $V$ taken with respect to the norm induced by $\inp{\cdot,\cdot}$---this $\sigma$-algebra does not depend on the the particular inner product chosen on $V$---. A \emph{random vector $X$ in $\Euc$}, is a $(\mathcal{F},\mathcal{B})$-measurable map $X:\Omega\rightarrow V$. The algebra $\mathcal{B}$ is the \emph{algebra of Borel sets of $\Euc$} and of course contains generalized tails as Borel sets. The (\emph{induced}) distribution $Q$ of $X$ is the probability measure on $\mathcal{B}$ defined by $Q(B)=\mathbb{P}(X^{-1}(B))$ for each $B\in \mathcal{B}$. If $\Euc_1=(V_1,\inp{\cdot,\cdot}_1)$ is another Euclidean space, a map $f:V\rightarrow V_1$ is \emph{Borel measurable} if $f$ is $(\mathcal{B},\mathcal{B}_1)$-measurable with $\mathcal{B}_1$ being the algebra of Borel sets of $\Euc_1$. If, $f$ is $(\mathcal{B},\mathcal{B}_1)$-measurable, then $f(X)$ is a random vector in $\Euc_1$. The \emph{mean vector} and the \emph{covariance} of the random vector $X$ of $\Euc$ are defined as follows. The map $f: V\rightarrow \R$ defined by $f(v)=\Ea{\inp{v,X}}$, where $\Ea{\cdot}$ denotes the expectation with respect to the distribution of $X$, is readily seen to be linear, provided that the expectation of $\inp{v,X}$ exists $\forall v\in V$. Hence $f\in V^*$ and, by Riesz's Representation Theorem, there is a unique vector $\mu\in V$ such that $f=\inp{\cdot,\mu}$. Such a vector is the mean of $X$. We are interested in \emph{centered random vectors}, namely, those vectors whose mean vector is zero ($f$ is the null vector of $V^*$). It can be shown that if $Y$ in $\Euc_1$ is the image of a centered random vector $X$ in $\Euc$ under a linear map, then $Y$ is a centered random vector. Now, let $X$ be a centered random vector in $\Euc$ and consider the map $F:V\times V\rightarrow \R$, $(u,v)\mapsto \Ea{\inp{u,X}\inp{v,X}}$. Since $F(u,u)$ is the variance of the random variable $\inp{u,X}$, if $F(u,u)$ is finite for all $u\in V$, then it is readily seen that $F$ is a symmetric positive semi-definite bilinear form in $\Euc$. Therefore, there exists a unique positive semi-definite self-adjoint endomorphism $\Sigma$ of $V$ such that $F(u,v)=\inp{u,\Sigma(v)}$. Such an endomorphism is the \emph{covariance} of $X$. If $L$ is a linear map between Euclidean spaces $\Euc$ and $\mathcal{F}$, and $X$ is a random vector in $\Euc$, then $L\Sigma L^*$ is the covariance of $L(X)$. In particular, if $L$ is an isomorphism of vector spaces, then $L\Sigma L^*$ is positive definite if and only if $\Sigma$ is positive definite.
When $\Euc=\Rn$, we recover the notion of random vector as a multivariate random variable with zero mean and positive semi-definite covariance matrix $\Sigma$: $\Sigma=\left\{F(e_i,e_j)\right\}_{i,j}$, where $e_i$ is the $i$-th vector of the standard orthonormal basis of $\Rn$. From now on, we assume that random vectors have a positive definite covariance, i.e., $F(u,u)>0$, for all $u\in V$, therefore the covariance of any of their images under an invertible linear map is such. We are now in position to state and prove our main results.

\begin{theorem}\label{thm:mar_olk_gen}
Let $X$ be a centered random vector in $\Euc=(V,\inp{\cdot,\cdot})$ with positive definite covariance $\Sigma$. Let $T$ be a non-empty subset of $V$ and $b\in V$, $b\not=0$. Then, for all $u\in \bl{T}$, it holds that 
$$\pra{X\in T}\leq \frac{\inp{u,\Sigma(u)}}{1+\inp{u,\Sigma(u)}}.$$
Therefore, if $\bl{T}$ is non-empty, then
\begin{equation*}
\pra{X\in T}\leq \inf_{u\in \bl{T}}\frac{\inp{u,\Sigma(u)}}{1+\inp{u,\Sigma(u)}}.
\end{equation*}
If $T$ is a closed convex set such that  $0\not\in T$, then the bound above is sharp.
\end{theorem}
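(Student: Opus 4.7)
The plan is to prove the pointwise bound by scalarizing the event $\{X\in T\}$ in the spirit sketched in the introduction, then to pass to the infimum, and finally to obtain sharpness by reducing to Marshall and Olkin's Theorem~\ref{thm:mar_olk} through a choice of orthonormal basis.

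For the first assertion I would fix an arbitrary $u\in\bl{T}$. By the defining property of the blocker, $\inp{u,x}\geq 1$ for every $x\in T$, so the inclusion of events
$$\{X\in T\}\subseteq \{\inp{u,X}\geq 1\}$$
is immediate. The scalar random variable $Y_u:=\inp{u,X}$ is centered (since $X$ is) with variance $F(u,u)=\inp{u,\Sigma(u)}$ by the construction of the covariance endomorphism recalled just before the theorem. Applying the classical univariate Cantelli inequality \eqref{eq:can_cla} to $Y_u$ with threshold $1$ gives the first inequality in the statement. The infimum bound then follows at once by ranging $u$ over $\bl{T}$, provided the latter is non-empty.

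For the sharpness claim I would first observe that when $T$ is closed convex and $0\notin T$, the blocker $\bl{T}$ is automatically non-empty: a strict separation of $T$ from $\{0\}$ produces some $u_0\in V$ and $\alpha>0$ with $\inp{u_0,x}\geq \alpha$ for all $x\in T$, so $u_0/\alpha\in\bl{T}$ and the infimum bound is meaningful. To exhibit an extremal distribution, I would fix an orthonormal basis $e_1,\ldots,e_n$ of $V$ and let $\Phi\colon\Euc\to\Rn$ denote the resulting isometric isomorphism. Under $\Phi$, the inner product of $\Euc$ coincides with the standard dot product on $\R^n$, the covariance endomorphism $\Sigma$ corresponds to the positive definite matrix $M=\{\inp{e_i,\Sigma(e_j)}\}_{i,j}$, the set $T$ maps to the closed convex set $\Phi(T)\subseteq\R^n$ with $0\notin\Phi(T)$, and $\bl{T}$ corresponds termwise to the feasible region of the minimization in \eqref{eq:mo_result}. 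Hence the infimum on the right-hand side of our bound equals the right-hand side of \eqref{eq:mo_result} applied to $\Phi(T)$ and $M$. Theorem~\ref{thm:mar_olk} then produces a centered random vector $Z$ in $\R^n$ with covariance $M$, whose support contains $\Phi(T)$, and for which equality holds; the pullback $\Phi^{-1}(Z)$ is a centered random vector in $\Euc$ with covariance $\Sigma$ that attains equality in our bound.

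The main obstacle I anticipate is bookkeeping rather than conceptual: one must verify that under the isometric isomorphism $\Phi$ every ingredient appearing in the bound — the blocker $\bl{T}$, the quadratic form $\inp{u,\Sigma(u)}$, and the set $T$ itself — transforms faithfully into its $\Rn$ counterpart, so that the sharpness of Theorem~\ref{thm:mar_olk} transports back to $\Euc$. Since $\Phi$ preserves inner products and maps positive definite endomorphisms to positive definite matrices, each of these verifications is routine, and the sharpness of our bound follows at once.
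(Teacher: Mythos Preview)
Your proof is correct and follows essentially the same route as the paper: scalarize via the blocker and apply the univariate Cantelli bound for the first inequality, then reduce sharpness to Marshall and Olkin's Theorem~\ref{thm:mar_olk} by passing to $\Rn$ through an orthonormal coordinate isomorphism and pulling back the extremal distribution. Your argument is slightly more explicit than the paper's in two places (you justify $\bl{T}\neq\emptyset$ via separation, and you spell out that $\bl{T}$ corresponds to the feasible region in \eqref{eq:mo_result}), but these are welcome clarifications rather than a different approach.
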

\begin{proof}
The second inequality follows from the first provided that $\bl{T}$ is non-empty. The proof of the first inequality is formally identical to the proof of the same inequality in $\Rn$ given in Section \ref{sec:intro} and is a direct consequence of \eqref{com:b2} and Cantelli's inequality \eqref{eq:can_cla} after noticing that for a random vector $X$ in $\Euc$ the variance of the random variable $\inp{u,X}$ is $\inp{u,\Sigma u}$. It remains to show that if $T$ is closed and convex in $\Euc$ and $0\not\in T$, then the bound is sharp. We deduce this result from Theorem \ref{thm:mar_olk} after reducing the general case to $\Rn$ by the following argument. Every Euclidean space is a topological vector space with respect to the standard topology induced by the inner product (recall that, in our terminology, Euclidean spaces are finite dimensional, and hence Hilbert spaces). Euclidean spaces of the same dimension are pairwise homeomorphic and all are homeomorphic to $\Rn$ under the coordinate map isomorphism $f$ (with respect to a fixed orthonormal basis). Hence, if $X$ is a centered random vector in $\Euc$, then $f(X)$ is a centered random vector in $\Rn$, and conversely. Moreover, if the covariance $\Sigma$ of $X$ is positive definite, then the covariance matrix $\widetilde{\Sigma}$ of $f(X)$ is positive definite, and conversely. Furthermore, $T$ is a convex closed set in $\Euc$ if and only if $f(T)$ is a closed set in $\Rn$ (with the standard Euclidean topology): linear images of convex sets are convex, and homeomorphic linear image of closed convex sets are closed and convex. Finally, since $f$ maps the zero vector $0_V$ of $\Euc$ into the zero vector $0$ of $\Rn$, it follows that $0_{V}\not\in T$ if and only if $0\not\in f(T)$. Now 
$$\pra{X\in T}=\pra{f(X)\in f(T)}.$$
Therefore, if $\widetilde{X}_0$ is a random vector in $\Rn$ with positive definite matrix $\widetilde{\Sigma}$ attaining the bound in Theorem \ref{thm:mar_olk} as equality, which exists because the hypotheses on $f(T)$ are satisfied, then $f^{-1}(\widetilde{X}_0)$ is a random vector in $\Euc$ attaining the bound in the present theorem. 
\end{proof}

\begin{theorem}\label{thm:mar_olk_gin} Let $X$ be a centered random vector of $\Euc=(V,\inp{\cdot,\cdot})$ with positive definite covariance $\Sigma$. Let $C$ be a non-empty subset of $V$ and $b\in V$, $b\not=0$. Then, for all $u\in C^*$ such that $\inp{u,b}>0$, it holds that  
$$\pra{X\succeq_C b}\leq \frac{\inp{u,\Sigma(u)}}{\inp{u,b}^2+\inp{u,\Sigma(u)}}.$$
If $C$ is a closed convex cone in $\Euc$ and $b\in V\setminus -C$, then $\al{b,C}$ is non-empty and
\begin{equation*}
\pra{X\succeq_C b}\leq \inf_{u\in \al{b,C}} \frac{\inp{u,\Sigma(u)}}{\inp{u,b}^2+\inp{u,\Sigma(u)}}=\inf_{u\in \bl{b+C}}\frac{\inp{u,\Sigma(u)}}{1+\inp{u,\Sigma(u)}}.
\end{equation*}
Moreover, the bound above is sharp.
\end{theorem}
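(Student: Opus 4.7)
The first inequality is essentially Cantelli applied to a well-chosen scalar projection. I would start by observing that for any $u\in C^*$ one has the implication $X-b\in C\Rightarrow \inp{u,X-b}\geq 0$, so $(X\succeq_C b)\subseteq (\inp{u,X}\geq \inp{u,b})$. Since $X$ is centered in $\Euc$, the scalar random variable $Y_u=\inp{u,X}$ is centered with variance $\inp{u,\Sigma(u)}$ (by definition of $\Sigma$ as the covariance endomorphism). When $\inp{u,b}>0$, the classical one sided \che\ inequality \eqref{eq:can_cla} applied to $Y_u$ at threshold $\inp{u,b}$ yields exactly the stated bound, and monotonicity of probability then gives $\pra{X\succeq_C b}\leq \inp{u,\Sigma(u)}/(\inp{u,b}^2+\inp{u,\Sigma(u)})$.

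For the second part, since $C$ is a closed convex cone, property \eqref{com:c3} gives $C^{**}=C$, so the hypothesis $b\in V\setminus -C$ is exactly $b\in V\setminus -C^{**}$. By Lemma \ref{lemma:0} this is equivalent to $\al{b,C}\neq\emptyset$ (and equivalent to $\bl{b+C}\neq\emptyset$). Taking the infimum of the pointwise bound over $\al{b,C}$ gives the first infimum. To identify it with the second infimum, I would set $q(u)=\inp{u,\Sigma(u)}$, which is a positive definite quadratic form on $V$ (since $\Sigma$ is self-adjoint and positive definite), and then apply Lemma \ref{lemma:1} directly: the functions $f$ and $g$ in that lemma are precisely the two expressions whose infima we want to equate.

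Sharpness I plan to obtain by reducing to Theorem \ref{thm:mar_olk_gen} with $T=b+C$. The event $(X\succeq_C b)$ coincides with $(X\in T)$, and $\bl{T}=\bl{b+C}$, so the right-hand side of the present theorem equals $\inf_{u\in \bl{T}}\inp{u,\Sigma(u)}/(1+\inp{u,\Sigma(u)})$. It remains to check that Theorem \ref{thm:mar_olk_gen} applies to this $T$: the set $b+C$ is convex as a translate of a convex set; it is closed because $C$ is closed and translation is a homeomorphism; and $0\in b+C$ would mean $-b\in C$, contradicting $b\in V\setminus -C$. Hence Theorem \ref{thm:mar_olk_gen} provides a centered random vector attaining the corresponding bound, and sharpness transfers through the equality of the two infima established via Lemma \ref{lemma:1}.

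The only genuinely delicate step I expect is the bookkeeping for sharpness, namely verifying that the closed convexity and non-containment of the origin that Theorem \ref{thm:mar_olk_gen} requires do hold for $T=b+C$ under the hypothesis $b\in V\setminus -C$; everything else reduces mechanically to Cantelli's scalar inequality, the definitions of $C^*$ and $\bl{\cdot}$, and the already-proved equality of infima in Lemma \ref{lemma:1}.
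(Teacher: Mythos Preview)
Your proposal is correct and follows essentially the same route as the paper: Cantelli applied to the scalar $\inp{u,X}$ for the first bound, Lemma~\ref{lemma:0} and Lemma~\ref{lemma:1} (with $q(u)=\inp{u,\Sigma(u)}$) for the equality of infima, and Theorem~\ref{thm:mar_olk_gen} with $T=b+C$ for sharpness. Your verification that $b+C$ is closed, convex, and excludes the origin is exactly what the paper invokes, only spelled out in slightly more detail.
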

\begin{proof}
Since $\pra{X\succeq_C b}\leq \pra{X\succeq_{C^{^{**}}} b}$, the first inequality follows from \eqref{eq:gen_in} (with $X$ in place of $y$ and $b$ in place of $x$) by applying Cantelli's inequality \eqref{eq:can_cla} to the random variable $\inp{u,X}$. 
If $C$ is a closed convex cone and $b\in V\setminus -C$, then $\al{b,C}$ is non-empty because the hypotheses of Lemma \ref{lemma:0} are satisfied (recall that since $C$ is a closed convex cone one has $C=C^{**}$). Moreover, by the same lemma, $\bl{b+C}$ is non-empty as well. Hence, the second inequality follows from the first by Lemma \ref{lemma:1} with $q(u)=\inp{u,\Sigma(u)}$. It remains to prove that the bound is sharp. Let $T=b+C$. Since $C$ is closed and convex, then so is $T$. Moreover, the hypotheses on $b$ and $C$ implies $0\not\in T$. Therefore, Theorem \ref{thm:mar_olk_gen} applies and we conclude that the bound is sharp.
\end{proof}
Although the previous theorem deals with the special case of $T=b+C$ of Theorem \ref{thm:mar_olk_gen} which in turn, up to technicalities, follows by Theorem \ref{thm:mar_olk}, Theorem \ref{thm:mar_olk_gin} provides a useful sharpening of the extended Cantelli's inequality given in Theorem \ref{thm:mar_olk} which is further exploited in the next corollary. We have already noticed in fact that the minimization problem involved in all the bounds above, reduces to minimizing the positive definite quadratic form $q(u)=\inp{u,\Sigma(u)}$ over $\al{b,C}$ or over $\bl{T}$. Even when $T=b+C$ and $C$ is a convex cone, this minimization problem is a non trivial convex programming problem \cite{nem}. However, settling for looser estimates, we can easily compute useful upper bounds on the probability of certain interesting tail events. For instance, $C$ can be the cone of co-positive  matrices, namely those symmetric matrices $A$ such that $x'Ax\geq 0$ for all $x\in \R^n_+$, or the sub-cone of positive semi-definite matrices \cite{MonBe}. In the latter case, $C$ is a self-dual cone and, if $b$ is a symmetric positive definite matrix, then the probability that $X-b$ is a positive semi-definite matrix, where $X$ is a random symmetric matrix, is the same as $\pra{X\succeq_{S^n_+} b}$ where $S^n_+$ denotes the cone of positive-definite matrices sitting in $\Euc=(S^n,\inp{\cdot,\cdot})$, with $S^n$ being the real vector space of the real symmetric matrices of order $n$, and $\inp{u,v}=\text{trace}(u\cdot v)$ being the Frobenius inner product. Suppose that $X$ is sampled from a centered and \emph{weakly spherical distribution}, namely, a distribution such that $\Sigma=\sigma^2 I$, where $I$ is identity endomorphism of $V$ and $\sigma^2$ is a positive real number. This includes the case in which  the entries of $X$ are sampled independently from the same centered distribution with variance $\sigma^2$ \cite{eaton}. Now, since $b\in S^n_+$ (because it is definite positive by hypothesis) and $\inp{b,\Sigma b}=\sigma^2\inp{b,b}=\sigma^2\text{trace}(b^2)>0$, Theorem \ref{thm:mar_olk_gin} applies and  
$$\pra{X\succeq_{S^n_+} b}\leq\frac{\sigma^2\text{trace}(b^2)}{(\text{trace}(b^2))^2+\sigma^2\text{trace}(b^2)}=\frac{\sigma^2}{\text{trace}(b^2)+\sigma^2}.$$
If $b=\lambda I_n$, where $I_n$ is the real identity matrix of order $n$ and $\lambda$ is a positive real number, then $\pra{X \succeq_{S^n_+} \lambda I_n}$ has a decay in $n$ which is not slower than $O(n^{-1})$. Hence, the probability that $\lambda$ bounds from below the least eigenvalue of a random symmetric matrix $X$ of order $n$ with centered weakly spherical distribution tends to zero (in $n$) with order not slower than $n^{-1}$. Therefore, if $E(\lambda)$ is the event that a symmetric matrix has its smallest eigenvalue bounded from below by $\lambda$, then for each $\alpha<1/2$, $E({n^{-2\alpha}})$ is an increasing sequence of events whose limit is $E(0)$ and 
\[
\lim_{n\rightarrow \infty}\pra{X \succeq_{S^n_+} 0}=\lim_{n\rightarrow \infty}\pra{E({n^{-2\alpha}})}\leq \lim_{n\rightarrow \infty}\frac{\sigma^2}{n^{1-2\alpha}+\sigma^2}=0.	
\] 
As one can expect, we conclude that the probability of sampling positive semi-definite matrices of order $n$ from symmetric matrices distributed according to a centered weakly spherical distribution, is zero in the limit. These facts are consequences of the special form of the covariance and of the special choice of $b$. By abstracting these properties, the argument can be generalized as follows.
\begin{corollary}\label{cor:mahala} Let $X$ be a centered random vector of $\Euc=(V,\inp{\cdot,\cdot})$ with positive definite covariance $\Sigma$. Let $C$ be a closed convex cone in $\Euc$. If $b\in \Sigma(C^*)\setminus -C$, where $\Sigma(C^*)$ denotes the linear image of $C^*$ under $\Sigma$, then
\begin{equation}\label{eq:mine_2}
\pra{X\succeq_C b}\leq \frac{1}{1+\nmaha{b}^2}
\end{equation}
where $\nmaha{b}=\inp{b,\Sigma^{-1}(b)}$. Moreover, the bound is sharp.
\end{corollary}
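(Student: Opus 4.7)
The plan is to obtain the corollary as a direct combination of Theorem \ref{thm:mar_olk_gin} and Lemma \ref{lemma:2}: the theorem furnishes the general variational bound, and the lemma evaluates the infimum in closed form when the threshold $b$ lies in the image $\Sigma(C^*)$.

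First I would invoke Theorem \ref{thm:mar_olk_gin}. The hypotheses are met: $C$ is a closed convex cone, and $b\in \Sigma(C^*)\setminus -C\subseteq V\setminus -C$, since the set-theoretic difference guarantees $b\not\in -C$ (and also $b\neq 0$, as otherwise $b=\Sigma(0)=0\in -C$ because $0\in C$). Thus
\begin{equation*}
\pra{X\succeq_C b}\leq \inf_{u\in \al{b,C}} \frac{\inp{u,\Sigma(u)}}{\inp{u,b}^2+\inp{u,\Sigma(u)}}.
\end{equation*}

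Next I would apply Lemma \ref{lemma:2} with $A=\Sigma$ (a self-adjoint positive definite endomorphism of $V$ by hypothesis) and $p(v)=\sqrt{\inp{v,\Sigma(v)}}$, whose corresponding quadratic form is $q(u)=\inp{u,\Sigma(u)}$. The lemma's hypothesis $b\in A(C^*)=\Sigma(C^*)$ is exactly what we have assumed, so the infimum above equals $((p^*(b))^2+1)^{-1}$, where the dual norm satisfies $p^*(b)=\sqrt{\inp{b,\Sigma^{-1}(b)}}=\nmaha{b}$ by the \emph{folklore} computation recalled just before Lemma \ref{lemma:2}. Substituting yields the bound \eqref{eq:mine_2}.

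Sharpness does not require any new work: Theorem \ref{thm:mar_olk_gin} already asserts that its bound is attained by some centered random vector with covariance $\Sigma$, and the corollary merely rewrites the same infimum in closed form. Hence any extremal vector for the theorem is automatically extremal for the corollary. The only step that needs a moment of care is checking that $\al{b,C}$ is non-empty under the stated hypotheses---but this is covered by Lemma \ref{lemma:0} applied to $b\in V\setminus -C= V\setminus -C^{**}$ (using $C=C^{**}$ by property \eqref{com:c3}), so there is no substantive obstacle beyond assembling the pieces.
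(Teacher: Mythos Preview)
Your proof is correct and follows essentially the same route as the paper, which tersely cites Lemma~\ref{lemma:1} and Lemma~\ref{lemma:2}; you make the implicit appeal to Theorem~\ref{thm:mar_olk_gin} (for the probability bound and for sharpness) explicit, and you also carefully verify the side conditions $b\neq 0$ and $\al{b,C}\neq\emptyset$. The substance is the same: use the variational Cantelli bound over $\al{b,C}$ and then evaluate the infimum via Lemma~\ref{lemma:2} with $A=\Sigma$.
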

\begin{proof}
Directly from Lemma \ref{lemma:1} and Lemma \ref{lemma:2} with $p^*=\sqrt{\inp{b,\Sigma^{-1}(b)}}$.
\end{proof}
As another application of the corollary, observe that the hypotheses of Corollary \ref{cor:mahala} are certainly met in $\Rn$ when $C=\R^n_+$ and $X$ is a centered random vector in $\Rn$ whose covariance matrix $\Sigma$ has a non-negative inverse: $\Sigma^{-1}\geq 0$---this includes the case of random vectors with uncorrelated coordinates---. Indeed, if $C=C^*=\R^n$ and $\Sigma^{-1}\geq 0$, then $\Sigma^{-1}(C^*)\subseteq C^*$. Hence, for all $b\in \R^n_+$, $\pra{X\geq b}\leq (1+\|b\|^2_M)^{-1}$, where $\|b\|_M=\|b\|_{\Sigma^{-1}}$ is the so-called Mahalanobis norm of $b$. Therefore, in this case, we have a useful and handy tool for testing statistical hypotheses. To see this, denote by $\R^n_{++}$ the interior of $\R^n_+$ and let $Y$ be a random (almost surely) non-negative vector in $\Rn$ whose positive definite covariance matrix $\Sigma$ has non-negative inverse. Let $\mu$ the mean vector of $Y$. Clearly $\mu$ is non-negative. Suppose $\mu\in \R^n_{++}$ and let $\lambda$ be a positive real number. Hence 
$$\pra{Y\geq (1+\lambda)\mu}=\pra{Y-\mu\geq \lambda\mu}\leq (1+\lambda^2\|\mu\|^2_{\Sigma^{-1}})^{-1}$$
and this yields a conservative criterion to test whether an observed value $y$ of $Y$ is significantly large: 
\begin{itemize}
\item[--] fix a significance level $\alpha$ (typically $\alpha=5\times 10^{-2}$);
\item[--] let $$\lambda=\frac{1}{|\mu\|^2_{\Sigma^{-1}}}\sqrt{\frac{1-\alpha}{\alpha}};$$
\item[--] if  $\lambda\leq \min_i\frac{y_i}{1+\mu_i}$, then declare $y$ large at the significance level $\alpha$; otherwise, declare $y$ not large at the same significance level.  
\end{itemize}
If $Y$ is not necessarily non-negative but the observed value $y$ is such that $y-\mu\in \R^n_{++}$, then the following simple measure allows us to evaluate how significantly far is $y$ from the mean. Since
$$\pra{Y\geq y}=\pra{Y-\mu\geq y-\mu}\leq \frac{1}{1+\|y-\mu\|^2_{\Sigma^{-1}}}$$
we conclude that the Mahalanobis norm of $y-\mu$ is a directed measure of the statistical significance of the deviation of $y$ from $\mu$. When $\Sigma$ has additional structure, the analysis can be refined. Suppose that  $\Sigma=D-\gamma uu'$, with $D$ a positive definite diagonal matrix, $u\in \R^n_+$, and $\gamma>0$. Hence $\Sigma$ is a rank-one update of a diagonal matrix. Such matrices arise when each coordinate of a random vector $X$ is a function of the elements of a class of a partition of a given ground set $S$ and there is a negative interaction between elements in different pairs of classes. If $\Sigma$ has this form, then, by the Sherman–Morrison formula, not only $\Sigma$, but also all of its principal minors have non-negative inverse. Now, let $J$ and $K$ be complementary subsets of $\{1,2,\ldots,n\}$, and let $W=\bigoplus_K\R e_i$, where $e_i$ is the $i$-th fundamental vector of $\R^n$, $i=1\ldots,n$. Hence, $W$ is the subspace spanned, say, by the coordinates whose index is in $K$. We can then project $X$ onto $\R^n/ W$. By linearity, the mean vector of the projection is simply the mean vector of $X$ with the coordinates whose index is in $K$ suppressed, while the covariance matrix of the projection is the principal minor of $\Sigma$ defined by $J$. Therefore, repeating the previous argument with $J$ varying among small sized subsets of $\{1,2,\ldots,n\}$, we may identify subsets of coordinates that are most responsible for the deviation.
\mybreak
An interesting consequence of Theorem \ref{thm:mar_olk} in $\Rn$, comes straightforwardly from another kind of duality of convex sets, namely, the Theory of Blocking Polyhedra (recall \eqref{com:b4} in Section \ref{sec:prel}). Consider the set $T\subseteq \R^n$ defined by $T=\{x\in \R^n \ |\ Ax\geq b, x\geq 0\}$ where $A$ is a non-negative real matrix with $m$ rows and $n$ columns and $b$ is a non-negative vector of $\R^m$. If $T$ is non-empty, then $T$ is a closed convex set (a polyhedron, in fact). Moreover, if $b\not=0$, then $0\not\in T$ and, by Theorem \ref{thm:mar_olk}, the probability $\pi(X,A,b)$ that a centered random vector in $\Rn$ satisfies the system of linear inequalities  $Ax\geq b, x\geq 0$ is bounded from above by $\inf_{u\in\bl{T}}f(u)$, where, as in Lemma \ref{lemma:1}, we have set $f(u)=u'\Sigma u/(1+u'\Sigma u)$. Suppose further that $b\in \R^m_{++}$. After scaling $A$, it follows that $X\in T$ if and only if $X\in \widetilde{T}$, where $\widetilde{T}=\{x\in \R^n \ |\ \widetilde{A}x\geq \mathbf{1}_m, x\geq 0\}$, $\widetilde{A}$ being the scaled matrix. Let $\widetilde{H}$ denote the convex hull of the rows of $\widetilde{A}$ viewed as vectors of $\R^m$. Since $\bl{\widetilde{T}}=\widetilde{H}+\R^m_+$ is a polyhedron again (recall \eqref{com:b4}), it follows that bounding $\pi(X,A,b)$ amounts to solve a convex quadratic problem over a polyhedron and each row $\widetilde{a}$ of $\widetilde{A}$ gives the upper bound $f(\widetilde{a})$ on $\pi(X,A,b)$. When $\widetilde{A}$ has a special form, these easy observations can be used profitably. Consider as an illustration the case when $\widetilde{A}$ is the \emph{edge versus vertices incidence matrix} of a graph $G$ with $m$ edges and $n$ vertices, none of which is isolated. Recall that the \emph{edge versus vertices incidence matrix} of $G$ has a row for each edge of $G$, a column for each vertex of $G$ and the entry corresponding to edge $e$ and vertex $v$ is 1 if $v$ and $e$ are incident, and zero otherwise. A \emph{matching} in $G$ is a subset consisting of pairwise non-adjacent edges. Let $\nu(G)$ denote the maximum cardinality of matching of $G$. Suppose that $n$ is even and that $\nu(G)=n/2$ (complete graphs, cycles, paths, all of them  with even order, and complete bipartite graphs with shores of the same cardinality have this property, for instance). Let $M$ be a matching of $G$ with $\nu(G)$ edges. The sum of the rows of $\widetilde{A}$ corresponding to the edges of $M$ equals $\mathbf{1}_n$. Hence $\frac{1}{\nu(G)}\mathbf{1}_n$ is a convex combination of the rows of $\widetilde{A}$ and therefore $\frac{1}{\nu(G)}\mathbf{1}_n\in \bl{\widetilde{T}}$. If the the covariance matrix of $X$ is $\sigma^2I_n$, then 
$$\pi(X,\widetilde{A},\mathbf{1}_m)\leq f\left(\frac{\mathbf{1}_n}{\nu(G)}\right)=\frac{\sigma^2}{4n+\sigma^2},$$
and we conclude that $\pi(X,\widetilde{A},\mathbf{1}_m)$ is $O(n^{-1})$.

\end{document}